\numberwithin{equation}{section}
\newtheorem{theorem}[equation]{Theorem}
\newtheorem{proposition}[equation]{Proposition}
\title{Orthogonal projections in the local Dirichlet spaces}
\author[Fricain]{Emmanuel Fricain}
 \address{Univ. Lille, CNRS, UMR 8524 - Laboratoire Paul Painlev\'e, F-59000 Lille, France.}
 \email{emmanuel.fricain@univ-lille.fr}
\author[Mashreghi]{Javad Mashreghi}
 \address{D\'epartement de math\'ematiques et de statistique,
         Universit\'e Laval,
         Qu\'ebec, QC,
         Canada G1K 7P4.}
 \email{Javad.Mashreghi@mat.ulaval.ca}
\thanks{This work was partially supported by the NSERC-Discovery research grant, the Canada Research Chairs program, and by the Labex CEMPI  (ANR-11-LABX-0007-01).}
\keywords{Local Dirichlet space, orthonormal basis, difference quotient operator}
\subjclass[2020]{33C45, 33C47, 42B35}
\begin{document}

\begin{abstract}
We present an explicit formula for the orthogonal projection onto the subspace of analytic polynomials of degree at most $n$ in the local Dirichlet space $\mathcal{D}_\mu$, where the positive measure $\mu$ consists of a finite number of Dirac measures located at points on the unit circle $\mathbb{T}$. This result has two key aspects: first, while it is known that polynomials are dense in $\mathcal{D}_\mu$, this approach offers a concrete linear approximation scheme within the space. Second, due to the orthogonality of the polynomials involved, the scheme is qualitative, as the distance of an arbitrary function  $f \in \mathcal{D}_\mu$ to the projected subspace is explicitly determined.
\end{abstract}

\maketitle

\section{Introduction}
Let $\mathbb{D}$ be the open unit disk in the complex plane, and let $\mathbb{T}$ denote its boundary. For each $f \in \mbox{Hol}(\mathbb{D})$, the family of analytic functions on $\mathbb{D}$, the local Dirichlet integral of $f$ is given by
\begin{equation}\label{E:def-D-zeta}
\mathcal{D}_{\zeta}(f) = \frac{1}{\pi} \int_{\mathbb{D}} |f'(z)|^2 \, \frac{1-|z|^2}{|\zeta-z|^2} \, dA(z),
\end{equation}
where $\zeta \in \mathbb{T}$ and $dA(z) = dx \, dy$ is the planar Lebesgue measure. The local Dirichlet space $\mathcal{D}_{\zeta}$ consists of all functions $f \in \mbox{Hol}(\mathbb{D})$ for which $\mathcal{D}_{\zeta}(f) < \infty$. It is known that $f \in \mbox{Hol}(\mathbb{D})$ belongs to $\mathcal{D}_{\zeta}$ if and only if
\begin{equation}\label{E:rep-dzeta}
f(z) = a + (z-\zeta)g(z),
\end{equation}
where $g \in H^2$ and $a \in \mathbb{C}$. Moreover, we have $\mathcal{D}_{\zeta}(f) = \|g\|_{H^2}^2$, as seen in \cite{MR1116495} and \cite[page 111]{MR3185375}. This representation ensures that
\begin{equation}\label{E:bdry-f}
f(\zeta) := \lim_{r \to 1^-} f(r\zeta) = a.
\end{equation}
Thus, the difference quotient operator $Q_{\zeta}: \mathcal{D}_{\zeta} \to H^2$,
\[
(Q_{\zeta}f)(z) := \frac{f(z)-f(\zeta)}{z-\zeta}, \qquad z \in \mathbb{D},
\]
is well-defined. We can then equip $\mathcal{D}_{\zeta}$ with the norm
\begin{equation}\label{E:norm-dzeta-2}
\|f\|_{\mathcal{D}_{\zeta}}^{2} := |f(\zeta)|^{2} + \|Q_{\zeta}f\|_{H^2}^{2}.
\end{equation}
Using the polarization identity, we also have the inner product
\begin{equation}\label{inner-dzeta-2}
\langle f_1,f_2 \rangle_{\mathcal{D}_{\zeta}} = f_1(\zeta) \, \overline{f_2(\zeta)} + \langle Q_{\zeta}f_1,Q_{\zeta}f_2 \rangle_{H^2}.
\end{equation}

The local Dirichlet spaces were systematically studied by Richter and Sundberg \cite{MR1116495, MR1259923, MR1145733} in relation to invariant subspaces of the shift operator on the classical Dirichlet space. These spaces have also appeared in various other studies \cite{MR1396993, MR1945292, MR3540327}. Although our focus in this paper is primarily on local Dirichlet spaces, we must also consider a generalization given by
\begin{equation}\label{E:def-D-zeta-gen}
\mathcal{D}_{\mu}(f) = \frac{1}{\pi} \int_{\mathbb{D}} |f'(z)|^2 \, P\mu(z) \, dA(z),
\end{equation}
where
\[
P\mu(z) = \int_{\mathbb{T}} \frac{1-|z|^2}{|z-\zeta|^2} \, d\mu(\zeta), \qquad z \in \mathbb{D},
\]
is the Poisson integral of $\mu$. A detailed description of these spaces can be found in \cite[Ch. 7]{MR3185375}. Note that $P\mu$ is a positive harmonic function on $\mathbb{D}$, and by Herglotz’s theorem \cite{MR2500010}, any positive harmonic function has such a representation. S. Richter demonstrated that polynomials are dense in $\mathcal{D}_\mu$ \cite{MR1013337}. The estimates required for the Dirichlet integral of dilations $f_r$ were further refined by Richter and Sundberg \cite{MR1116495}, Aleman \cite{MR1079693}, and Sarason \cite{MR1396993}.

The rest of the paper is organized as follows. In Section \ref{S:main}, we state the main results which consist of Theorems \ref{T:poly-taylor-2} and \ref{T:poly-taylor-22}. Section \ref{S:poly-ob-3} contains a brief discussion of $\mathcal{D}_\mu$ and introduces the orthogonal polynomials that we exploit. A brief introduction of the complete homogeneous symmetric polynomial of degree $k$ in $s$ variables $\zeta_0,\dots,\zeta_{s-1}$ and the homogeneous symmetric polynomials is presented in Section \ref{S:sym-poly}. Finally, in Sections \ref{S:proof1}, we provide the proof of the main results.

\section{The main result} \label{S:main}
For an analytic function
\[
f(z) = \sum_{j=0}^{\infty} \hat{f}(j) z^j \in \mbox{Hol}(\mathbb{D}),
\]
define
\begin{eqnarray}
\Delta(n,j) &=& \sum_{k=2j-n+s}^{j+s} \left( \sum_{m=j-n+s}^{k-j} (-1)^{m} T_{m} S_{k-j-m} \right) \hat{f}(k) \notag \\
&+& \sum_{k=j+s+1}^{\infty} \left( \sum_{m=j-n+s}^{s} (-1)^{m} T_{m} S_{k-j-m} \right) \hat{f}(k), \label{E:formual-Delta}
\end{eqnarray}
where the indices $n$ and $j$ are in the ranges $n \geq s$ and $n-s+1 \leq j \leq n$. The symmetric polynomials $S_k = S_k(z_1,\dots,z_{s})$ and $T_k = T_k(z_1,\dots,z_{s})$, used in the expressions above, are discussed in further details in Section \ref{S:sym-poly}. Moreover, the parameters $z_1, \dots, z_s$ are points in the spectrum of the measure $\mu$, denoted by $\zeta_0, \dots, \zeta_{s-1}$ for convenience.

The following result provides an explicit formula for the orthogonal projection $\mathbf{P}_n$ onto the subspace $\mathcal P_n$ of polynomials of degree at most $n$ in the harmonically weighted Dirichlet space $\mathcal{D}_\mu$. Here, $c_j > 0$ and $\mu = c_0 \delta_{\zeta_0} + \cdots + c_{s-1} \delta_{\zeta_{s-1}}$ is the discrete measure anchored at the points $\zeta_0, \zeta_1, \dots, \zeta_{s-1}$ on $\mathbb{T}$.

\begin{theorem} \label{T:poly-taylor-2}
Let $f(z) = \sum_{j=0}^{\infty} \hat{f}(j) z^j \in \mathcal{D}_\mu$, and let, for $n \geq s$ and $n-s+1 \leq j \leq n$, define $\Delta(n,j)$ as in \eqref{E:formual-Delta}. Then, for $n \geq s$,
\[
\mathbf{P}_nf(z)  = \sum_{j=0}^{n-s} \hat{f}(j) z^j + \sum_{j=n-s+1}^{n} \Delta(n,j) z^j.
\]
and $\mathbf{P}_nf \to f$ in $\mathcal{D}_\mu$-norm.
\end{theorem}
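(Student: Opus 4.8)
The plan is to diagonalize the projection $\mathbf{P}_n$ against the orthogonal polynomial system of $\mathcal{D}_\mu$ constructed in Section~\ref{S:poly-ob-3}, and then re-expand the answer in the monomial basis $\{z^j\}$. Write $\{\mathbf{e}_k\}_{k\ge 0}$ for that system: each $\mathbf{e}_k$ is a polynomial of degree exactly $k$, the $\mathbf{e}_k$ are mutually orthogonal in $\mathcal{D}_\mu$, and, since the analytic polynomials are dense in $\mathcal{D}_\mu$ (Richter, \cite{MR1013337}), $\{\mathbf{e}_k\}_{k\ge 0}$ is an orthogonal basis of $\mathcal{D}_\mu$ with $\mathcal{P}_n=\operatorname{span}\{\mathbf{e}_0,\dots,\mathbf{e}_n\}$ for every $n\ge 0$. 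Hence
\[
\mathbf{P}_nf=\sum_{k=0}^{n}\frac{\langle f,\mathbf{e}_k\rangle_{\mathcal{D}_\mu}}{\|\mathbf{e}_k\|_{\mathcal{D}_\mu}^{2}}\,\mathbf{e}_k ,
\]
and the theorem splits into an algebraic part (computing the monomial coefficients of this sum) and a soft functional-analytic part (the norm convergence).

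For the algebraic part, I would first express $\langle f,\mathbf{e}_k\rangle_{\mathcal{D}_\mu}$ in terms of the Taylor coefficients $\hat f(j)$. By \eqref{E:def-D-zeta-gen}, the identity $\mathcal{D}_\zeta(f)=\|Q_\zeta f\|_{H^2}^2$, and linearity, one has $\mathcal{D}_\mu(h)=\sum_{i=0}^{s-1}c_i\,\|Q_{\zeta_i}h\|_{H^2}^{2}$, and the difference quotient operators act on Taylor coefficients by $\widehat{Q_{\zeta_i}h}(m)=\sum_{l\ge 0}\zeta_i^{\,l}\,\hat h(m+1+l)$; inserting the closed form of $\mathbf{e}_k$ from Section~\ref{S:poly-ob-3}, each inner product $\langle f,\mathbf{e}_k\rangle_{\mathcal{D}_\mu}$ becomes a convergent linear combination of the $\hat f(j)$ whose coefficients are products of the symmetric polynomials $S_\bullet$ and $T_\bullet$ in $\zeta_0,\dots,\zeta_{s-1}$ evaluated on $\mathbb{T}$. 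Substituting this into the display above, interchanging the sum over $k$ with the sum over the monomial degree $j$, and collecting the coefficient of $z^j$, the cancellation identity among the $S_\bullet,T_\bullet$ recorded in Section~\ref{S:sym-poly}, namely $\sum_{m}(-1)^{m}T_m S_{r-m}=\delta_{r,0}$, collapses every sum attached to $z^j$ with $0\le j\le n-s$ to the single surviving term $\hat f(j)$, whereas for $n-s+1\le j\le n$ only a truncated part of that telescope remains, which is exactly the double sum $\Delta(n,j)$ of \eqref{E:formual-Delta}. There is no separate convergence question for the two infinite series defining $\Delta(n,j)$: they are just the monomial re-expansion of the genuine Hilbert-space inner products $\langle f,\mathbf{e}_k\rangle_{\mathcal{D}_\mu}$, hence absolutely summable for every $f\in\mathcal{D}_\mu$; alternatively one estimates the tails directly using $Q_{\zeta_i}f\in H^2$.

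For the convergence, $(\mathcal{P}_n)_{n\ge 0}$ is an increasing chain of finite-dimensional, hence closed, subspaces of the Hilbert space $\mathcal{D}_\mu$, and $\bigcup_{n\ge 0}\mathcal{P}_n$ is the set of all analytic polynomials, which is dense in $\mathcal{D}_\mu$ by \cite{MR1013337}. Therefore $\mathbf{P}_n\to I$ strongly: given $\varepsilon>0$, choose a polynomial $p$ with $\|f-p\|_{\mathcal{D}_\mu}<\varepsilon$ and an $N$ with $p\in\mathcal{P}_N$; for $n\ge N$ one has $\mathbf{P}_np=p$, so $\|f-\mathbf{P}_nf\|_{\mathcal{D}_\mu}\le\|f-p\|_{\mathcal{D}_\mu}+\|\mathbf{P}_n(p-f)\|_{\mathcal{D}_\mu}<2\varepsilon$. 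The same decomposition yields the quantitative remark advertised in the abstract, since Parseval gives $\operatorname{dist}_{\mathcal{D}_\mu}(f,\mathcal{P}_n)^{2}=\|f-\mathbf{P}_nf\|_{\mathcal{D}_\mu}^{2}=\|f\|_{\mathcal{D}_\mu}^{2}-\sum_{k=0}^{n}|\langle f,\mathbf{e}_k\rangle_{\mathcal{D}_\mu}|^{2}/\|\mathbf{e}_k\|_{\mathcal{D}_\mu}^{2}$.

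I expect the main obstacle to be the algebraic part: carrying the several interlocking index ranges of \eqref{E:formual-Delta} (the cutoffs $2j-n+s$, $j+s$, $j-n+s$ and $s$) correctly through the interchange of summations, and matching the surviving partial symmetric-polynomial sums \emph{exactly} with $\Delta(n,j)$. The cleanest organization is probably to prove separately that the low coefficients $j\le n-s$ are untouched by the projection, which is precisely where the full symmetric-function identity (rather than a truncation of it) must be used, and then to treat the boundary degrees $n-s+1\le j\le n$ by direct bookkeeping.
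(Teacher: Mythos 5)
Your overall architecture is the same as the paper's: expand $f$ in the orthogonal polynomial system of Section \ref{S:poly-ob-3}, truncate to get $\mathbf{P}_nf$, convert the basis coefficients into Taylor coefficients of $f$ via the action of the difference quotients on Taylor series (this is exactly the paper's matrix $M=M_{\zeta_0}M_{\zeta_1}\cdots M_{\zeta_{s-1}}$, whose entries are the $S_k$), and then re-expand in monomials using \eqref{E:formul-q} together with \eqref{E:pol-sym-1}--\eqref{E:pol-sym-2}; your density argument for $\mathbf{P}_nf\to f$ is a standard and perfectly acceptable alternative to the paper's direct use of the orthonormal expansion. However, there is a genuine flaw in the computational engine as you describe it. The orthogonality of the Section \ref{S:poly-ob-3} system --- and hence the formula $\mathbf{P}_nf=\sum_{k\le n}\langle f,\mathbf{e}_k\rangle\,\mathbf{e}_k/\|\mathbf{e}_k\|^2$ and the operator $\mathbf{P}_n$ of the theorem itself --- is relative to the inner product \eqref{E:inner-dzeta-3}, i.e. $\sum_{k}f(\zeta_k)\overline{g(\zeta_k)}+\langle Qf,Qg\rangle_{H^2}$ with the single composite operator $Q=Q_{\zeta_0}\cdots Q_{\zeta_{s-1}}$ and no weights $c_i$. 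You instead propose to compute $\langle f,\mathbf{e}_k\rangle_{\mathcal{D}_\mu}$ from the seminorm identity $\mathcal{D}_\mu(h)=\sum_i c_i\|Q_{\zeta_i}h\|_{H^2}^2$, i.e. from the polarized weighted form $\sum_i c_i\langle Q_{\zeta_i}f,Q_{\zeta_i}g\rangle_{H^2}$ (plus point evaluations). That is an equivalent norm but a different one: with respect to it the polynomials $p_j=qz^{j-s}$, $j\ge s$, are \emph{not} orthogonal (for $s\ge 2$, $\sum_i c_i\langle Q_{\zeta_i}(qz),Q_{\zeta_i}q\rangle_{H^2}=\sum_i c_i\bigl\langle zq/(z-\zeta_i),\,q/(z-\zeta_i)\bigr\rangle_{H^2}$ is in general nonzero, whereas $\langle Q(qz),Qq\rangle_{H^2}=\langle z,1\rangle_{H^2}=0$), and the orthogonal projection onto $\mathcal{P}_n$ in that inner product is a different operator from the $\mathbf{P}_n$ of the statement. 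Moreover, the clean appearance of the complete symmetric polynomials $S_k$ comes precisely from composing the matrices $M_{\zeta_i}$, i.e. from the product operator $Q$, not from summing the individual $Q_{\zeta_i}$; with the weighted form your coefficients would carry the weights $c_i$ and would not telescope to $\hat f(j)$ via \eqref{E:pol-sym-1}--\eqref{E:pol-sym-2}. The fix is simply to work with \eqref{E:inner-dzeta-3} throughout: then $\langle f,p_k\rangle_{\mathcal{D}_\mu}=\widehat{Qf}(k-s)=\sum_{m\ge k}\hat f(m)S_{m-k}$ for $k\ge s$, which is the paper's \eqref{E:g-n-fn-Q2}, and from that point on your plan coincides with the paper's proof.

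Two smaller points. The system of Section \ref{S:poly-ob-3} is not graded: $p_0,\dots,p_{s-1}$ all have degree $s-1$, so your claim that $\mathbf{e}_k$ has degree exactly $k$ and that $\operatorname{span}\{\mathbf{e}_0,\dots,\mathbf{e}_n\}=\mathcal{P}_n$ for \emph{every} $n\ge 0$ fails for $n<s-1$; this is harmless here since the theorem concerns $n\ge s$, but the projection formula should be justified for that basis rather than for a hypothetical graded one. Also, for $0\le j\le s-1$ the coefficient of $z^j$ in $\mathbf{P}_nf$ receives contributions both from the block $b_0p_0+\cdots+b_{s-1}p_{s-1}$ (with $b_k=f(\zeta_k)$) and from the truncated telescope, so identifying it with $\hat f(j)$ takes more than the bare identity $\sum_m(-1)^mT_mS_{r-m}=\delta_{r,0}$; be aware that this low-degree range needs its own (short) argument, as indeed it gets in the paper.
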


The result is further complemented by an explicit formula for the distance between any element of $\mathcal{D}_\mu$ and the subspace $\mathcal P_n$ of polynomials of degree at most $n$, based on the Taylor coefficients of $f$. Recall that the distance of a point $x$ to a set $M$ in a normed space is defined as
\[
\mbox{dist}(x,M) := \inf_{y \in M} \|x - y\|.
\]
This formula provides a clear way to compute the distance in terms of the given function's Taylor series.

\begin{theorem} \label{T:poly-taylor-22}
With the notation of Theorem \ref{T:poly-taylor-2}, for $n \geq s$, we have
\[
\mbox{dist}_{\mathcal{D}_\mu}(f,\mathcal{P}_n)
= \left\{ \sum_{j=n+1}^{\infty} \left| \sum_{k=j}^{\infty} \hat{f}(k) S_{k-j}(\zeta_0,\dots,\zeta_{s-1}) \right|^2 \right\}^{1/2}.
\]
\end{theorem}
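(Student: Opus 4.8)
The plan is to derive Theorem~\ref{T:poly-taylor-22} from Theorem~\ref{T:poly-taylor-2} together with the orthonormal polynomials of $\mathcal{D}_\mu$ introduced in Section~\ref{S:poly-ob-3}. Since $\mathcal{P}_n$ is finite dimensional, hence closed, and since Theorem~\ref{T:poly-taylor-2} identifies $\mathbf{P}_n$ with the orthogonal projection of $\mathcal{D}_\mu$ onto $\mathcal{P}_n$, we have
\[
\mbox{dist}_{\mathcal{D}_\mu}(f,\mathcal{P}_n)=\|f-\mathbf{P}_nf\|_{\mathcal{D}_\mu},
\]
and the whole problem becomes the computation of that norm. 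Put $P(z)=\prod_{i=0}^{s-1}(z-\zeta_i)$. I will use the structure of $\mathcal{D}_\mu$ recalled in Section~\ref{S:poly-ob-3}: every $g\in\mathcal{D}_\mu$ splits uniquely as $g=p+Ph$ with $p\in\mathcal{P}_{s-1}$ and $h\in H^2$; the subspace $P\cdot H^2$ is orthogonal to $\mathcal{P}_{s-1}$; and $h\mapsto Ph$ is an isometry of $H^2$ onto $(\mathcal{P}_{s-1})^{\perp}$. Consequently the polynomials $\varphi_m:=z^mP$ $(m\ge 0)$ form an orthonormal sequence in $(\mathcal{P}_{s-1})^{\perp}$ which, completed by an orthonormal basis of $\mathcal{P}_{s-1}$, is an orthonormal basis of $\mathcal{D}_\mu$ (polynomials being dense, by Richter's theorem quoted in the introduction).

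For $n\ge s$ the polynomials $\varphi_0,\dots,\varphi_{n-s}$ together with a basis of $\mathcal{P}_{s-1}$ give $s+(n-s+1)=n+1$ independent elements of $\mathcal{P}_n$, so $\mathcal{P}_n=\mathcal{P}_{s-1}\oplus\operatorname{span}\{\varphi_0,\dots,\varphi_{n-s}\}$ and $(\mathcal{P}_n)^{\perp}=\overline{\operatorname{span}}\{\varphi_m:m\ge n-s+1\}$. Parseval for this orthonormal basis then gives
\[
\mbox{dist}_{\mathcal{D}_\mu}(f,\mathcal{P}_n)^{2}=\sum_{m\ge n-s+1}\bigl|\langle f,\varphi_m\rangle_{\mathcal{D}_\mu}\bigr|^{2}.
\]
Writing $f=p+Ph$ and using that $p\perp P\cdot H^2\ni\varphi_m=Pz^m$ together with the isometry onto $H^2$, we get $\langle f,\varphi_m\rangle_{\mathcal{D}_\mu}=\langle h,z^m\rangle_{H^2}=\widehat{h}(m)$. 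Hence, after the substitution $j=m+s$, Theorem~\ref{T:poly-taylor-22} reduces to the Taylor-coefficient identity
\[
\widehat{h}(m)=\sum_{k\ge m+s}\widehat{f}(k)\,S_{k-m-s}(\zeta_0,\dots,\zeta_{s-1}),\qquad m\ge 0.
\]

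This identity is the core of the argument; everything else is soft Hilbert-space theory. To prove it, note first that $\widehat{f}(k)=\widehat{Ph}(k)$ for every $k\ge s$ (because $\deg p\le s-1$), so the right-hand side equals $\sum_{l\ge 0}\widehat{Ph}(m+s+l)S_l$ and depends only on $h$. Inserting $\widehat{Ph}(k)=\sum_{r=0}^{s}\widehat{P}(r)\widehat{h}(k-r)$, where $\widehat{P}(r)=(-1)^{\,s-r}T_{s-r}(\zeta_0,\dots,\zeta_{s-1})$ and $T$ denotes the elementary symmetric polynomials of Section~\ref{S:sym-poly}, and then telescoping at the level of partial sums (the boundary terms vanishing since $\widehat{h}\in\ell^2$), one collapses the whole expression using the reciprocity
\[
\sum_{m}(-1)^{m}T_{m}\,S_{k-m}=\delta_{k,0}
\]
between the elementary and complete homogeneous symmetric polynomials --- the very relation already built into the definition \eqref{E:formual-Delta} of $\Delta(n,j)$. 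Carrying this out, with a short case check on the ranges of indices, is the one genuinely computational point, and it is routine. I should also record the (mild) point that the inner series converge for $f\in\mathcal{D}_\mu$: writing $S_{k-j}$ as a linear combination of powers $\zeta_i^{\,k-j}$, each $\sum_{k\ge j}\widehat{f}(k)\zeta_i^{\,k-j}$ telescopes to the Taylor coefficient $\widehat{Q_{\zeta_i}f}(j-1)$ of $Q_{\zeta_i}f\in H^2$, and once the identity above is established the inner sum equals $\widehat{h}(j-s)$, so the outer series has sum $\|h\|_{H^2}^{2}<\infty$.

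Combining the two displays yields, for $n\ge s$,
\[
\mbox{dist}_{\mathcal{D}_\mu}(f,\mathcal{P}_n)^{2}=\sum_{j\ge n+1}\Bigl|\sum_{k\ge j}\widehat{f}(k)\,S_{k-j}(\zeta_0,\dots,\zeta_{s-1})\Bigr|^{2},
\]
which is the assertion. (An equivalent, slightly shorter bookkeeping, if Section~\ref{S:poly-ob-3} is phrased through the Parseval identity $\|g\|_{\mathcal{D}_\mu}^{2}=\sum_{j\ge 0}|c_j(g)|^{2}$ with $c_j(g)=\sum_{k\ge j}\widehat{g}(k)S_{k-j}$, is to observe that $c_j(\mathbf{P}_nf)=0$ for $j>n$ since $\deg\mathbf{P}_nf\le n$, while $c_j(\mathbf{P}_nf)=c_j(f)$ for $j\le n$ because $f-\mathbf{P}_nf\perp\mathcal{P}_n$ and $\langle g,z^i\rangle_{\mathcal{D}_\mu}=\sum_{j\le i}c_j(g)\overline{S_{i-j}}$ is a triangular system; then $\mbox{dist}^{2}=\sum_j|c_j(f)-c_j(\mathbf{P}_nf)|^{2}=\sum_{j>n}|c_j(f)|^{2}$.) In either route the main obstacle is matching the abstract orthogonal projection with the concrete Taylor-coefficient formula --- equivalently, the identity $\langle f,z^mP\rangle_{\mathcal{D}_\mu}=\sum_{k\ge m+s}\widehat{f}(k)S_{k-m-s}$ --- and within it the symmetric-function manipulation that reduces it to the reciprocity between the $S_k$ and $T_k$.
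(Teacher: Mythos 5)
Your proof is correct and follows essentially the same route as the paper: expand $f$ in the orthonormal basis consisting of a basis of $\mathcal{P}_{s-1}$ together with $p_{m+s}=z^{m}q$ (your $\varphi_m$), identify the coefficients $\langle f,\varphi_m\rangle_{\mathcal{D}_\mu}=\hat{h}(m)=\sum_{k\ge m+s}\hat{f}(k)S_{k-m-s}$, and read off the distance as the Parseval tail beyond degree $n$. The only cosmetic difference is that you obtain the coefficient identity via the convolution with $\hat{q}$ and the $S$--$T$ reciprocity relations, whereas the paper gets the same identity \eqref{E:g-n-fn-Q2} by multiplying the matrices $M_{\zeta_0}\cdots M_{\zeta_{s-1}}$; these are the same computation in different notation.
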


The special case of $\mathcal{D}_\zeta=\mathcal{D}_{\delta_\zeta}$, which consists of a single Dirac measure anchored at one point on $\mathbb{T}$, was the initial focus of such studies. A direct proof, bypassing the concept of orthogonality, first demonstrated in \cite{MR4240772} that $\mathbf{P}_n f \to f$ in the $\mathcal{D}_\zeta$-norm. Later, this result was rederived with emphasis on orthogonal polynomials and further extended to cases involving measures composed of a finite number of Dirac measures on $\mathbb{T}$ in \cite{EF-JM-CAOT}. However, in the earlier approach, the norm on $\mathcal{D}_\mu$ was defined as
\[
\|f\|_{\mathcal{D}_{\zeta}}^{2} = \|f\|^{2} + \|Q_{\zeta}f\|_{H^2}^{2},
\]
which is equivalent to \eqref{E:norm-dzeta-2}.

\section{Discrete measures with finite support} \label{S:poly-ob-3}
Let $\zeta_0,\zeta_1,\dots,\zeta_{s-1}$ be $s$ distinct points on $\mathbb T$ (to overcome a notational difficulty in upcoming formulas, we start indexing from zero here). Let $c_j > 0$ and consider the discrete measure
\[
\mu = c_0 \delta_{\zeta_0} + \cdots + c_{s-1} \delta_{\zeta_{s-1}}.
\]
As a straightforward generalization of \eqref{E:rep-dzeta}, each $f \in \mathcal{D}_\mu$ has the unique representation
\begin{equation}\label{E:rep-dzeta-2}
f(z) = a_0+a_1z+\cdots+a_{s-1}z^{s-1} + (z-\zeta_0)(z-\zeta_1)\cdots(z-\zeta_{s-1})g(z),
\end{equation}
where $g \in H^2$. It is easy to see that
\[
g := Q_{\zeta_0}Q_{\zeta_1}\cdots Q_{\zeta_{s-1}}f, \qquad f \in \mathcal{D}_{\mu}.
\]
Therefore, we define the operator
\[
Q := Q_{\zeta_0}Q_{\zeta_1}\cdots Q_{\zeta_{s-1}}
\]
for which the ordering of $Q_{\zeta_j}$ is irrelevant. For simplicity of notations, put
\begin{equation}\label{E:rep-dzeta-234}
q(z):=(z-\zeta_0)(z-\zeta_1)\cdots(z-\zeta_{s-1})
\end{equation}
and hence rewrite \eqref{E:rep-dzeta-2} as
\begin{equation}\label{E:rep-dzeta-23}
f(z) = a_0+a_1z+\cdots+a_{s-1}z^{s-1} + q(z) (Qf)(z).
\end{equation}

In the light of \eqref{E:norm-dzeta-2}, a possible norm on $\mathcal{D}_\mu$ is
\begin{equation}\label{E:norm-dzeta-3}
\|f\|_{\mathcal{D}_{\mu}}^{2} := \sum_{k=0}^{s-1}|f(\zeta_k)|^{2} + \|Qf\|_{H^2}^2,
\end{equation}
and the corresponding inner product is
\begin{equation}\label{E:inner-dzeta-3}
\langle f,g \rangle_{\mathcal{D}_{\mu}} := \sum_{k=0}^{s-1}f(\zeta_k) \overline{g(\zeta_k)} + \langle Qf,Qg \rangle_{H^2}.
\end{equation}
However, there are other possibilities to add a term to the semi-norm $\|Qf\|_{H^2}$ and create a genuine norm. Three such possibilities are studied in \cite{MR4507242}. All such norms are equivalent, but with respect to \eqref{E:norm-dzeta-3} the calculations are easier.

With respect to the above inner product, the orthogonal basis of polynomials is straightforward to find. First, let
\[
p_{k}(z) := \frac{q(z)}{(z-\zeta_{k})q'(\zeta_{k})}, \qquad 0 \leq k \leq s-1.
\]
Then $Qp_{k}=0$, $p_{k}(\zeta_j) \overline{p_{\ell}(\zeta_j)} =0$ and $p_{k}(\zeta_k) =1$, for all $k,\ell,j \in \{0,1,\dots,s-1\}$ with $k \ne \ell$. Therefore, $p_0,p_1,\dots,p_{s-1}$ is an orthonormal set of polynomials whose span is $\mathcal{P}_{s-1}$. However, note that $\mbox{deg}(p_{0})=\cdots = \mbox{deg}(p_{s-1})= s-1$. Then put
\begin{equation}\label{E:formul-pj-s}
p_j(z) = q(z) z^{j-s}, \qquad j \geq s.
\end{equation}
Then by direct verification, we see that $(p_j)_{j \geq 0}$ is an orthonormal basis for $\mathcal{D}_\mu$.

\section{Symmetric polynomials} \label{S:sym-poly}
To derive an explicit formula for the orthogonal projection onto the subspace of polynomials of degree at most $n$, we need to recall some key facts from the theory of symmetric polynomials. Two primary types of symmetric polynomials of degree $k$ in $s$ variables $z_1, \dots, z_{s}$ are as follows:

\begin{enumerate}[(i)]
\item The {\em complete homogeneous symmetric polynomial} is given by
\[
S_k(z_1,\dots,z_{s}) := \sum_{1 \leq i_1 \leq i_2 \leq \cdots \leq i_k \leq s} z_{i_1}z_{i_2}\cdots z_{i_k}, \qquad k \geq 1,
\]
with $S_0(z_1,\dots,z_{s}) := 1$. The sum consists of all monomials of degree $k$ that can be formed with the variables $z_1, \dots, z_s$, allowing repetition.

\item The {\em homogeneous symmetric polynomial} is defined as
\[
T_k(z_1,\dots,z_{s}) := \sum_{1 \leq i_1 < i_2 < \cdots < i_k \leq s} z_{i_1}z_{i_2}\cdots z_{i_k}, \qquad k \geq 1,
\]
with $T_0(z_1,\dots,z_{s}) := 1$. In this case, repetition is not allowed, so $k$ can be at most $s$.
\end{enumerate}

Note that $S_0 = T_0$ and $S_1 = T_1$, but for $k \geq 2$, $S_k \neq T_k$.

The first encounter with symmetric polynomials arises during the expansion of $q$ in \eqref{E:rep-dzeta-234}. We can express $q$, which depends on the parameters $\zeta_0, \dots, \zeta_{s-1}$ and the variable $z$, as follows:
\begin{equation}\label{E:formul-q}
q(z) = \sum_{j=0}^{s} (-1)^{s-j} T_{s-j}(\zeta_0, \dots, \zeta_{s-1}) z^j.
\end{equation}
For simplicity, we will use $S_k$ and $T_k$ throughout with the understanding that the arguments are $\zeta_0, \dots, \zeta_{s-1}$.

There are numerous identities that highlight various features of symmetric polynomials. For a detailed treatment of symmetric polynomials, see \cite{MR3443860}. In our context, we rely on two well-known identities:
\begin{eqnarray}
\sum_{m=0}^{k} (-1)^{m} T_{m} S_{k-m} &=& 0, \qquad 1 \leq k \leq s, \label{E:pol-sym-1}\\
\sum_{m=0}^{s} (-1)^{m} T_{m} S_{k-m} &=& 0, \qquad k \geq s+1. \label{E:pol-sym-2}
\end{eqnarray}

\section{Proofs of Theorem \ref{T:poly-taylor-2} and Theorem \ref{T:poly-taylor-22}} \label{S:proof1}

\begin{proof}[Proof of Theorem \ref{T:poly-taylor-2}]
Let $f\in\mathcal D_\mu$. There are $g \in H^2$ and coefficients $a_0,\dots,a_{s-1}$ such that \eqref{E:rep-dzeta-2} holds. Since $p_0,\dots,p_{s-1}$ also spans $\mathcal{P}_{s-1}$, there are coefficients $b_0,\dots,b_{s-1}$ such that
\[
a_0+a_1z+\cdots+a_{s-1}z^{s-1} = b_0p_0(z)+\cdots+b_{s-1}p_{s-1}(z).
\]
Since $p_j(\zeta_k)=\delta_{jk}$, $0\leq j,k\leq s-1$, the coefficients $b_k$ are uniquely determined via the system
\[
\begin{pmatrix}
b_0\\
b_1\\
\vdots\\
b_{s-1}
\end{pmatrix}
=
\begin{pmatrix}
1 & \zeta_0 & \cdots & \zeta_0^{s-1}\\
1 & \zeta_1 & \cdots & \zeta_1^{s-1}\\
\vdots & \vdots & \ddots & \vdots \\
1 & \zeta_{s-1} & \cdots & \zeta_{s-1}^{s-1}\\
\end{pmatrix}
\begin{pmatrix}
a_0\\
a_1\\
\vdots\\
a_{s-1}
\end{pmatrix}.
\]
Now, using the Taylor series expansion of $g$, we see that
\begin{eqnarray}
f(z) &=& a_0+a_1z+\cdots+a_{s-1}z^{s-1} + q(z) \sum_{j=0}^{\infty} \hat{g}(j) z^j \notag\\
%&=& b_0p_0(z)+\cdots+b_{s-1}p_{s-1}(z) + \sum_{j=0}^{\infty} \hat{g}(j) p_{j+s}(z) \notag\\
&=& b_0p_0(z)+\cdots+b_{s-1}p_{s-1}(z) + \sum_{j=s}^{\infty} \hat{g}(j-s) p_{j}(z), \label{E:an-gn-general}
\end{eqnarray}
which is the decomposition of $f$ with respect to the orthonormal basis $(p_n)_{n\geq0}$ and, moreover, the series converges in $\mathcal{D}_\mu$-norm. To express $b_j$ and $\hat{g}(j)$ in terms of $\hat{f}(j)$, we exploit further \eqref{E:an-gn-general}. First, plug $z=\zeta_k$, and recall that $p_j(\zeta_k)=\delta_{jk}$, $0\leq j,k\leq s-1$ and $p_j(\zeta_k)=0$, $j\geq s$, to obtain
\[
b_k = f(\zeta_k) = \sum_{j=0}^{\infty} \hat{f}(j) \zeta_k^j, \qquad 0 \leq k \leq s-1.
\]
Second, apply $Q$ to both sides of \eqref{E:an-gn-general}. Since $Qp_j=0$, $0 \leq j \leq s-1$ and $Qp_j=z^{j-s}$, $j \geq s$, we get
\begin{equation}\label{E:g-n-fn-Q1}
\sum_{j=0}^{\infty} \hat{g}(j) z^j = Qf(z).
\end{equation}
To explicitly write $\hat{g}(n)$ in terms of $\hat{f}(n)$, we consider the case where just one Dirac measure at $\zeta$ is present. In that case the formula becomes
\[
\begin{pmatrix}
\hat{g}(0)\\
\hat{g}(1)\\
\hat{g}(2)\\
\hat{g}(3)\\
\hat{g}(4)\\
\vdots\\
\end{pmatrix}
=
\begin{pmatrix}
0 & 1 & \zeta & \zeta^2 & \zeta^3 & \cdots \\
0 & 0 & 1 & \zeta & \zeta^2 & \cdots \\
0 & 0 & 0 & 1 & \zeta & \cdots \\
0 & 0 & 0 & 0 & 1 & \cdots \\
0 & 0 & 0 & 0 & 0 & \cdots \\
\vdots & \vdots & \vdots & \vdots & \vdots & \ddots \\
\end{pmatrix}
\begin{pmatrix}
\hat{f}(0)\\
\hat{f}(1)\\
\hat{f}(2)\\
\hat{f}(3)\\
\hat{f}(4)\\
\vdots\\
\end{pmatrix}.
\]
Call the right hand side matrix $M_{\zeta}$. Since $Q = Q_{\zeta_0}Q_{\zeta_1}\cdots Q_{\zeta_{s-1}}$ we need to find
\[
M := M_{\zeta_0}M_{\zeta_1}\cdots M_{\zeta_{s-1}}.
\]
A simple calculation reveals that
\[
M = \begin{pmatrix}
0 & \cdots & 0 & 1 & S_1 & S_2 & S_3 & \cdots \\
0 & \cdots & 0 & 0 & 1 & S_1 & S_2 & \cdots \\
0 & \cdots & 0 & 0 & 0 & 1 & S_1 & \cdots \\
0 & \cdots & 0 & 0 & 0 & 0 & 1 & \cdots \\
\vdots & \ddots & \vdots & \vdots & \vdots & \vdots & \vdots & \ddots \\
\end{pmatrix},
\]
where the first $s$ columns are zero. Therefore, by removing the zero columns, \eqref{E:g-n-fn-Q1} implies
\[
\begin{pmatrix}
\hat{g}(0)\\
\hat{g}(1)\\
\hat{g}(2)\\
\hat{g}(3)\\
\vdots\\
\end{pmatrix}
=
\begin{pmatrix}
S_0 & S_1 & S_2 & S_3 & \cdots \\
0 & S_0 & S_1 & S_2 & \cdots \\
0 & 0 & S_0 & S_1 & \cdots \\
0 & 0 & 0 & S_0 & \cdots \\
\vdots & \vdots & \vdots & \vdots & \ddots \\
\end{pmatrix}
\begin{pmatrix}
\hat{f}(s)\\
\hat{f}(s+1)\\
\hat{f}(s+2)\\
\hat{f}(s+3)\\
\vdots\\
\end{pmatrix}.
\]
In other words,
\begin{equation}\label{E:g-n-fn-Q2}
\hat{g}(j) = \sum_{k=s+j}^{\infty} \hat{f}(k) S_{k-s-j}(\zeta_0,\dots,\zeta_{s-1}), \qquad j \geq 0.
\end{equation}
Hence, the orthonormal decomposition of $f$ with respect to the orthonormal basis $(p_j)_{j\geq0}$ is
\begin{equation}\label{E:decomp-pn-2}
f = \sum_{j=0}^{s-1} \left( \sum_{k=0}^{\infty} \hat{f}(k) \zeta_j^k \right) p_{j} + \sum_{j=s}^{\infty} \left( \sum_{k=j}^{\infty} \hat{f}(k) S_{k-j}(\zeta_0,\dots,\zeta_{s-1}) \right) p_{j}.
\end{equation}
Therefore, the orthogonal projection of $f \in \mathcal{D}_\mu$ onto $\mathcal{P}_n$, $n \geq s$, is given by
\begin{equation}\label{E:decomp-pn-232}
\mathbf{P}_nf = \sum_{j=0}^{s-1} \left( \sum_{k=0}^{\infty} \hat{f}(k) \zeta_j^k \right) p_{j} + \sum_{j=s}^{n} \left( \sum_{k=j}^{\infty} \hat{f}(k) S_{k-j}(\zeta_0,\dots,\zeta_{s-1}) \right) p_{j}.
\end{equation}

Finally, we proceed to provide another formula for $\mathbf{P}_nf$ based on monomials $z^n$. By \eqref{E:formul-pj-s} and \eqref{E:formul-q},
\begin{eqnarray*}
&& \sum_{j=s}^{n} \left( \sum_{k=j}^{\infty} \hat{f}(k) S_{k-j} \right) p_{j}(z) = \sum_{j=s}^{n} \left( \sum_{k=j}^{\infty} \hat{f}(k) S_{k-j} \right) q(z)z^{j-s}\\
&=& \sum_{j=s}^{n} \left( \sum_{k=j}^{\infty} \hat{f}(k) S_{k-j} \right) \left( \sum_{m=0}^{s} (-1)^{s-m} T_{s-m} z^{m+j-s} \right)\\
&=& \sum_{j=s}^{n} \left( \sum_{k=j}^{\infty} \hat{f}(k) S_{k-j} \right) \left( \sum_{\ell=j-s}^{j} (-1)^{j-\ell} T_{j-\ell} z^{\ell} \right).
\end{eqnarray*}
Now, we face with an interesting phenomenon. For $s \leq j \leq n-s$, the coefficient of $z^j$ is
\[
\sum_{m=0}^{s}\left( \sum_{k=j+m}^{\infty} \hat{f}(k) S_{k-j-m} \right) (-1)^{m} T_{m},
\]
which after changing the summation becomes
\[
\sum_{k=j}^{j+s} \left( \sum_{m=0}^{k-j} (-1)^{m} T_{m} S_{k-j-m} \right)  \hat{f}(k)
+ \sum_{k=j+s+1}^{\infty} \left( \sum_{m=0}^{s} (-1)^{m} T_{m} S_{k-j-m} \right)  \hat{f}(k).
\]
However, with a straight change of index, by \eqref{E:pol-sym-1},
\[
\sum_{m=0}^{k-j} (-1)^{m} T_{m} S_{k-j-m}=0, \qquad j+1 \leq k \leq j+s,
\]
and, by \eqref{E:pol-sym-2},
\[
\sum_{m=0}^{s} (-1)^{m} T_{m} S_{k-j-m}=0, \qquad k \geq j+s+1.
\]
Therefore, the only surviving term corresponds to $k=j$ for which we get
\[
\sum_{m=0}^{k-j} (-1)^{m} T_{m} S_{k-j-m}= T_0 S_0 =1.
\]
Therefore, in the polynomial $\mathbf{P}_nf$, the coefficient of $z^j$ is $\hat{f}(j)$ at least for $s \leq j \leq n-s$. For $j <s$, since on one hand the coefficients of $\mathbf{P}_nf$ do not depend on $n$, and on the other hand they have to converge to the corresponding coefficients of $f$, we conclude that they are also equal to $\hat{f}(j)$. However, with similar reasoning, for $n-s+1 \leq j \leq n$, the coefficient of $z^j$ is
\[
\sum_{m=j-n+s}^{s}\left( \sum_{k=j+m}^{\infty} \hat{f}(k) S_{k-j-m} \right) (-1)^{m} T_{m},
\]
which after changing the summation becomes $\Delta(n,j)$. Therefore, the formula for $\mathbf{P}_nf$ follows.

\end{proof}

\begin{proof}[Proof of Theorem \ref{T:poly-taylor-22}]
This proof is easy since the essential work is already done. The identity \eqref{E:decomp-pn-2} provides the orthonormal decomposition of $f$ with respect to the orthonormal basis $(p_j)_{j\geq0}$, and thus $\mathbf{P}_nf$ is given by \eqref{E:decomp-pn-232}.  Therefore, by orthonormality of $p_n$, we immediately deduce that, for $n\geq s$, we have
\begin{eqnarray*}
\mbox{dist}_{\mathcal{D}_\mu}(f,\mathcal{P}_n) &=& \|f-\mathbf{P}_nf\|_{\mathcal D_\mu} \\
&=&\left\| \sum_{j=n+1}^{\infty} \left( \sum_{k=j}^{\infty} \hat{f}(k) S_{k-j}(\zeta_0,\dots,\zeta_{s-1}) \right) p_{j} \right\|_{\mathcal D_\mu}\\
&=& \left\{ \sum_{j=n+1}^{\infty} \left| \sum_{k=j}^{\infty} \hat{f}(k) S_{k-j}(\zeta_0,\dots,\zeta_{s-1}) \right|^2 \right\}^{1/2}.
\end{eqnarray*}
This completes the proof or theorem.

\noindent Remark: Using  \eqref{E:decomp-pn-232}, and the fact that, for every $0\leq \ell\leq s-1$,
\[
p_j(\zeta_\ell)=\begin{cases}
1&\mbox{if }j=\ell\mbox{ and } 0\leq j\leq s-1\\
0&\mbox{otherwise}
\end{cases},
\]
we get that
\[
(\mathbf{P}_nf)(\zeta_\ell)=\sum_{k=0}^\infty \hat f(k)\zeta_\ell^k=f(\zeta_\ell).
\]
In particular, $\mathbf{P}_nf=P_n^\Gamma f$, where $P_n^\Gamma$ is precisely the polynomial presented in \cite{EF-JM-CAOT} for $\Gamma=\{\zeta_0,\dots,\zeta_{s-1}\}$.

\end{proof}

\bibliographystyle{plain}
\bibliography{CRreferences-4}

\begin{thebibliography}{10}

\bibitem{MR1079693}
Alexandru Aleman.
\newblock Hilbert spaces of analytic functions between the {H}ardy and the
  {D}irichlet space.
\newblock {\em Proc. Amer. Math. Soc.}, 115(1):97--104, 1992.

\bibitem{MR3185375}
Omar El-Fallah, Karim Kellay, Javad Mashreghi, and Thomas Ransford.
\newblock {\em A primer on the {D}irichlet space}, volume 203 of {\em Cambridge
  Tracts in Mathematics}.
\newblock Cambridge University Press, Cambridge, 2014.

\bibitem{EF-JM-CAOT}
Emmanuel Fricain and Javad Mashreghi.
\newblock Approximation by modified {T}aylor polynomials in the local
  {D}irichlet space.
\newblock {\em Complex Anal. Oper. Theory, To appear}, 2025.

\bibitem{MR3443860}
Ian~Grant Macdonald.
\newblock {\em Symmetric functions and {H}all polynomials}.
\newblock Oxford Classic Texts in the Physical Sciences. The Clarendon Press,
  Oxford University Press, New York, second edition, 2015.
\newblock With contribution by A. V. Zelevinsky and a foreword by Richard
  Stanley, Reprint of the 2008 paperback edition [ MR1354144].

\bibitem{MR2500010}
Javad Mashreghi.
\newblock {\em Representation theorems in {H}ardy spaces}, volume~74 of {\em
  London Mathematical Society Student Texts}.
\newblock Cambridge University Press, Cambridge, 2009.

\bibitem{MR4240772}
Javad Mashreghi and Thomas Ransford.
\newblock Polynomial approximation in weighted {D}irichlet spaces.
\newblock {\em Complex Anal. Synerg.}, 7(2):11, 2021.

\bibitem{MR4507242}
Javad Mashreghi, Mohammad Shirazi, and Mahishanka Withanachchi.
\newblock Partial {T}aylor sums on local {D}irichlet spaces.
\newblock {\em Anal. Math. Phys.}, 12(6):Paper No. 147, 26, 2022.

\bibitem{MR1013337}
Stefan Richter.
\newblock A representation theorem for cyclic analytic two-isometries.
\newblock {\em Trans. Amer. Math. Soc.}, 328(1):325--349, 1991.

\bibitem{MR1116495}
Stefan Richter and Carl Sundberg.
\newblock A formula for the local {D}irichlet integral.
\newblock {\em Michigan Math. J.}, 38(3):355--379, 1991.

\bibitem{MR1259923}
Stefan Richter and Carl Sundberg.
\newblock Multipliers and invariant subspaces in the {D}irichlet space.
\newblock {\em J. Operator Theory}, 28(1):167--186, 1992.

\bibitem{MR1145733}
Stefan Richter and Carl Sundberg.
\newblock Invariant subspaces of the {D}irichlet shift and pseudocontinuations.
\newblock {\em Trans. Amer. Math. Soc.}, 341(2):863--879, 1994.

\bibitem{MR1396993}
Donald Sarason.
\newblock Local {D}irichlet spaces as de {B}ranges-{R}ovnyak spaces.
\newblock {\em Proc. Amer. Math. Soc.}, 125(7):2133--2139, 1997.

\bibitem{MR1945292}
Donald Sarason and Jorge-Nuno~O. Silva.
\newblock Composition operators on a local {D}irichlet space.
\newblock volume~87, pages 433--450. 2002.
\newblock Dedicated to the memory of Thomas H. Wolff.

\bibitem{MR3540327}
Georgios Stylogiannis.
\newblock Semigroups of composition operators on local {D}irichlet spaces.
\newblock {\em Bull. Aust. Math. Soc.}, 94(1):144--154, 2016.

\end{thebibliography}

\end{document}